\documentclass[11pt]{article}

\usepackage{amsmath}
\usepackage{amsthm}
\usepackage{amssymb}
\usepackage{latexsym}
\usepackage{mathabx}
\usepackage{pstricks}
\usepackage{graphicx, pst-plot, pst-node, pst-text, pst-tree}
\usepackage{titlefoot}
\usepackage{enumerate}
\usepackage{titlesec}
\usepackage{units}
\usepackage[small,it]{caption}
\usepackage{rotating}

\setlength{\captionmargin}{0.4in}
\setlength{\abovecaptionskip}{-5pt}

\usepackage{color}
\definecolor{lightgray}{rgb}{0.9, 0.9, 0.9}
\definecolor{darkgray}{rgb}{0.7, 0.7, 0.7}
\definecolor{darkblue}{rgb}{0, 0, .4}

\usepackage[bookmarks]{hyperref}
\hypersetup{
	colorlinks=true,
	linkcolor=darkblue,
	anchorcolor=darkblue,
	citecolor=darkblue,
	urlcolor=darkblue,
	pdfpagemode=UseThumbs,
	pdftitle={Maximal Independent Sets and Separating Covers},
	pdfsubject={Combinatorics},
	pdfauthor={Vincent Vatter},
	pdfkeywords={integer complexity, maximal independent set, separating covers}
}

\newtheorem{theorem}{Theorem}
\newtheorem{proposition}[theorem]{Proposition}

\newcounter{todocounter}

\newcommand{\minisec}[1]{\bigskip\noindent{\bf #1.}}

\setlength{\textwidth}{6in}
\setlength{\textheight}{8in}
\setlength{\topmargin}{0in}
\setlength{\headsep}{0.25in}
\setlength{\headheight}{0.25in}
\setlength{\oddsidemargin}{0.25pt}
\setlength{\evensidemargin}{0.25pt}
\makeatletter
\newfont{\footsc}{cmcsc10 at 8truept}
\newfont{\footbf}{cmbx10 at 8truept}
\newfont{\footrm}{cmr10 at 10truept}
\pagestyle{plain}

\renewenvironment{abstract}%
		{
		  \begin{list}{}%
		     {\setlength{\rightmargin}{1in}%
		      \setlength{\leftmargin}{1in}}%
		   \item[]\ignorespaces\begin{small}}%
		 {\end{small}\unskip\end{list}}

\datefoot{\today}
\amssubj{05C35, 05C69, 05D05, 11A99}
\keywords{integer complexity, maximal independent set, separating covers}

\newpagestyle{main}[\small]{
	\headrule
	\sethead[\usepage][][]
	{\sc Maximal Independent Sets and Separating Covers}{}{\usepage}}

\title{\sc{Maximal Independent Sets and Separating Covers}}
\author{\sc{Vincent Vatter}\\
\small Department of Mathematics\\[-1pt]
\small University of Florida\\[-1pt]
\small Gainesville, Florida USA\\[-10pt]}

\titleformat{\section}
	{\large\sc}
	{\thesection.}{1em}{}	

\titleformat{\subsection}
	{\normalsize\sc}
	{\thesubsection.}{1em}{}	

\date{}

\begin{document}
\maketitle

\pagestyle{main}

\begin{abstract}
In 1973, Katona raised the problem of determining the maximum number of subsets in a separating cover on $n$ elements.  The answer to Katona's question turns out to be the inverse to the answer to a much simpler question: what is the largest integer which is the product of positive integers with sum $n$?  We give a combinatorial explanation for this relationship, via Moon and Moser's answer to a question of Erd\H{o}s: how many maximal independent sets can a graph on $n$ vertices have?  We conclude by showing how Moon and Moser's solution also sheds light on a problem of Mahler and Popken's about the complexity of integers.
\end{abstract}

\maketitle

\renewcommand{\S}{\mathcal{S}}
\newcommand{\M}{\mathcal{M}}
\newcommand{\C}{\mathcal{C}}
\newcommand{\Cong}{\equiv}
\newcommand{\Mod}{\mathop{\rm mod}\nolimits}
\newcommand{\fproduct}{\ell}
\newcommand{\fkatona}{s}
\newcommand{\fmoon}{g}

\section{Introduction}

We begin with a simply stated problem, which has made numerous appearances in mathematics competitions:%
\footnote{In particular, the 1976 IMO asked for the $n=1976$ case, the 1979 Putnam asked for the $n=1979$ case, and on April 23rd 2002, the 3rd Community College of Philadelphia Colonial Mathematics Challenge asked for the $n=2002$ case.} %
what is the largest number which can be written as the product of positive integers that sum to $n$?

We denote this number by $\fproduct(n)$.  A moment's thought shows that one should use as many $3$s as possible; if $m\ge 5$ appears in the product then it can be replaced by $3(m-3)>m$, and while $2$s and $4$s can occur in the product, the latter can occur at most once since $4\cdot 4<2 \cdot 3 \cdot 3$ and the former at most twice since $2 \cdot 2 \cdot 2<3 \cdot 3$.  This shows that for $n\ge 2$,
$$
\fproduct(n)
=
\left\{
\begin{array}{ll}
3^i
&
\mbox{if $n=3i$,}\\
4 \cdot 3^{i-1}
&
\mbox{if $n=3i+1$,}\\
2 \cdot 3^i
&
\mbox{if $n=3i+2$,}\\
\end{array}
\right.
$$
while $\fproduct(1)=1$.  Note that it follows from the combinatorial definition of $\fproduct$ that this function is strictly increasing and {\it super-multiplicative\/}, meaning that it satisfies $\fproduct(n_1)\fproduct(n_2)\le\fproduct(n_1+n_2)$.

In 1973, G. O. H. Katona~\cite[Problem 8, p. 306]{katona:combinatorial-s:} posed a problem which looks completely unlike the determination of $\fproduct(n)$.  A {\it separating cover\/}%
\footnote{We make this slight deviation from Katona's original formulation so that $\fkatona(1)=1$.}
over the ground set $X$ is a collection $\S$ of subsets of $X$ which satisfies two properties:
\begin{itemize}
\item the union of the sets in $\S$ is all of $X$, and
\item for every pair of distinct elements $x,y\in X$ there are disjoint sets $S,T\in\S$ with $x\in S$ and $y\in T$.
\end{itemize}
Katona asked about the function
$$
\fkatona(m)=\min\{n : \mbox{there is a separating cover on $m$ elements with $n$ sets\}}.
$$
M.-C. Cai and A. C. C. Yao gave independent solutions several years later.

\begin{theorem}[Cai~\cite{cai:solutions-to-ed:} and Yao~\cite{yao:on-a-problem-of:}, independently]\label{sss}
For all $m\ge 2$,
$$
\fkatona(m)
=
\left\{
\begin{array}{ll}
3i
&
\mbox{if $2 \cdot 3^{i-1}<m\le 3^i$,}\\
3i+1
&
\mbox{if $3^i<m\le 4 \cdot 3^{i-1}$,}\\
3i+2
&
\mbox{if $4 \cdot 3^{i-1}<m\le 2 \cdot 3^i$,}\\
\end{array}
\right.
$$
while $\fkatona(1)=1$.
\end{theorem}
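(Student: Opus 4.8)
The plan is to prove the equivalent statement that $\sigma(n)$, the largest number of elements admitting a separating cover with $n$ sets, equals $\fproduct(n)$. The theorem follows at once, because $\fkatona(m)=\min\{n:\sigma(n)\ge m\}$ and, by the strict monotonicity of $\fproduct$ recorded above, this minimum is well defined; substituting the three explicit values of $\fproduct$ then reproduces the three stated intervals, with $\fkatona(1)=1$ checked directly (a single set cannot separate two elements, and $\fproduct(1)=1$). So everything comes down to the two inequalities $\fproduct(n)\le\sigma(n)$ and $\sigma(n)\le\fproduct(n)$.

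For $\fproduct(n)\le\sigma(n)$ I would exhibit the product construction. Write $n=a_1+\cdots+a_k$ for a partition realizing $\fproduct(n)=a_1\cdots a_k$ (all parts equal to $3$ apart from the small correction forced by $n\bmod 3$), take the ground set $X=[a_1]\times\cdots\times[a_k]$, and for each block $j$ and value $t\in[a_j]$ let $S_{j,t}$ be the set of tuples whose $j$-th coordinate equals $t$. These $\sum a_j=n$ sets cover $X$, and two distinct tuples differ in some coordinate $j$, hence lie in the disjoint sets $S_{j,t}$ and $S_{j,t'}$ with $t\ne t'$; so $\S$ is a separating cover on $\fproduct(n)$ elements, giving $\fkatona(\fproduct(n))\le n$.

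The crux is the reverse inequality $\sigma(n)\le\fproduct(n)$, which I would obtain by passing to maximal independent sets. Given any separating cover $\S=\{S_1,\dots,S_n\}$ on $X$, form the graph $H$ on vertex set $[n]$ joining $i$ to $j$ exactly when $S_i\cap S_j=\varnothing$, and for $x\in X$ put $I(x)=\{i:x\in S_i\}$. Each $I(x)$ is nonempty by the cover axiom and independent in $H$, since its indexed sets all contain $x$ and so pairwise meet. The separating axiom supplies, for distinct $x,y$, indices $i\in I(x)$ and $j\in I(y)$ with $S_i\cap S_j=\varnothing$, that is, with $ij\in E(H)$; moreover $i\notin I(y)$, for $y\in S_i$ would force $y\in S_i\cap S_j=\varnothing$. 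Now extend each $I(x)$ to a maximal independent set $M(x)$ of $H$, chosen arbitrarily. If $M(x)=M(y)$ for some distinct $x,y$, then the edge $ij$ above has both endpoints in $M(x)$, as $i\in I(x)\subseteq M(x)$ and $j\in I(y)\subseteq M(y)=M(x)$, contradicting the independence of $M(x)$. Hence $x\mapsto M(x)$ injects $X$ into the maximal independent sets of $H$, so $|X|$ is at most the number of maximal independent sets of an $n$-vertex graph, which is $\fproduct(n)$ by the theorem of Moon and Moser.

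I expect the genuine obstacle to be imported rather than local: it is the Moon--Moser bound that an $n$-vertex graph has at most $\fproduct(n)$ maximal independent sets. Among the steps carried out here, the delicate observation is that \emph{any} extension of the $I(x)$ to maximal independent sets is automatically injective, the separating axiom being used precisely to manufacture, for each ordered pair of distinct elements, an edge of $H$ straddling $I(x)$ and $I(y)$, which then forbids a common maximal independent set. What remains is the routine bookkeeping of inverting $\fproduct$ across the three residue classes of $n$ modulo $3$ to recover the intervals $2\cdot3^{i-1}<m\le3^i$, $3^i<m\le4\cdot3^{i-1}$, and $4\cdot3^{i-1}<m\le2\cdot3^i$.
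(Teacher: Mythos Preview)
Your proposal is correct and follows essentially the same route as the paper: your upper bound $\sigma(n)\le\fproduct(n)$ is exactly the paper's Proposition~\ref{sss2mis} (separating cover $\to$ graph $\to$ Moon--Moser), and your product construction for the lower bound is precisely what the paper's Proposition~\ref{mis2sss} yields when applied to the extremal disjoint union of small complete graphs. The inversion you describe at the end is the content of the paper's Proposition~\ref{main}.
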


Thus $\fkatona(\fproduct(n))=n$ for all positive integers $n$ --- in other words, $\fkatona$ is a left inverse of $\fproduct$.  Ironically, the question we began with appears at the beginning of R. Honsberger's {\it Mathematical Gems III\/}~\cite{honsberger:mathematical-ge:}, while Katona's problem occurs at the end, where Honsberger describes the proof as ``long and much more complicated than the arguments in the earlier chapters.''  We present a short combinatorial explanation for the equivalence of these two problems.

\section{A Combinatorial Interpretation of ${\fproduct}$}
In order to give a combinatorial explanation for why $\fkatona(\fproduct(n))=n$, we first need a combinatorial interpretation of $\fproduct$.  We use a graph-theoretic interpretation, although several others are available.%
\footnote{Another --- in terms of integer complexity --- is given later in this note.  Additionally, $\fproduct(n)$ is the order of the largest abelian subgroup of the symmetric group of order $n$; see Bercov and Moser~\cite{bercov:on-abelian-perm:}.} %
Let $G$ be a  graph over the vertex set $V(G)$.  A subset $I\subseteq V(G)$ is {\it independent\/} if there is no edge between any two vertices of $I$, and it is a {\it maximal independent set (MIS)\/} if it is not properly contained in any other independent set.  In the 1960s, P. Erd\H{o}s asked how many MISes a graph on $n$ vertices could have, which we define as
$$
\fmoon(n)=\max\{m : \mbox{there is graph on $n$ vertices with $m$ MISes\}}.
$$
Let us denote by $m(G)$ the number of MISes in the graph $G$.  This quantity is particularly easy to compute when $G$ is a disjoint union:

\begin{proposition}\label{mis-product}
The disjoint union of the graphs $G$ and $H$ has $m(G)m(H)$ MISes.
\end{proposition}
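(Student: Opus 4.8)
The plan is to establish a bijection between the maximal independent sets of the disjoint union $G \sqcup H$ and the ordered pairs $(I_G, I_H)$ in which $I_G$ is an MIS of $G$ and $I_H$ is an MIS of $H$; counting such pairs then immediately gives $m(G)m(H)$. The natural candidate is the restriction map $I \mapsto (I \cap V(G),\, I \cap V(H))$, with proposed inverse $(I_G, I_H) \mapsto I_G \cup I_H$. The structural fact I will lean on throughout is that a disjoint union has no edges between $V(G)$ and $V(H)$, so a set $I$ is independent in $G \sqcup H$ if and only if $I \cap V(G)$ is independent in $G$ and $I \cap V(H)$ is independent in $H$.

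First I would show that the restriction map sends MISes to pairs of MISes. Given an MIS $I$ of $G \sqcup H$, both parts $I \cap V(G)$ and $I \cap V(H)$ are independent by the observation above, so the content is in verifying maximality of each. If $I \cap V(G)$ were not maximal in $G$, some vertex $v \in V(G)$ could be added to it while preserving independence in $G$; since $v$ has no neighbors in $V(H)$, adding $v$ to $I$ would preserve independence in $G \sqcup H$, contradicting the maximality of $I$. The symmetric argument disposes of $I \cap V(H)$.

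Next I would check the reverse direction: that $I_G \cup I_H$ is an MIS of $G \sqcup H$ whenever $I_G$ and $I_H$ are MISes of $G$ and $H$. Independence is again immediate, and for maximality one notes that any vertex $v \notin I_G \cup I_H$ lies in exactly one of $V(G)$ or $V(H)$, say $V(G)$, so if $I_G \cup I_H \cup \{v\}$ were independent then so would be $I_G \cup \{v\}$, contradicting the maximality of $I_G$. Since these two maps are visibly mutually inverse, together they furnish the desired bijection and the proposition follows.

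The only real subtlety, and the step I expect to demand the most care, is the maximality bookkeeping: maximality is a priori a property of the whole graph $G \sqcup H$, and the entire argument hinges on the absence of cross edges, which is precisely what allows a local enlargement in a single factor to lift to a global enlargement (and conversely). Everything else reduces to routine manipulation of the set identities $(I_G \cup I_H) \cap V(G) = I_G$ and $(I_G \cup I_H) \cap V(H) = I_H$.
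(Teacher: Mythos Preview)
Your proof is correct and follows exactly the same approach as the paper: both establish the bijection $M \mapsto (M\cap V(G),\, M\cap V(H))$ with inverse $(M_G,M_H)\mapsto M_G\cup M_H$. The paper states this in two sentences without spelling out the maximality checks, which you have supplied in full.
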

\begin{proof}
For any MIS $M$ of this union, $M\cap V(G)$ must be an MIS of $G$ and $M\cap V(H)$ must be an MIS of $H$.  Conversely, if $M_G$ and $M_H$ are MISes of $G$ and $H$, respectively, then $M_G\cup M_H$ is an MIS of the disjoint union of $G$ and $H$.
\end{proof}

Because the complete graph on $n$ vertices has $n$ MISes, Proposition~\ref{mis-product} implies that $\fmoon(n)\ge\fproduct(n)$ for all positive integers $n$; we need only take a disjoint union of edges, triangles, and complete graphs on $4$ vertices to achieve this lower bound.  In 1965, J. W. Moon and L. Moser proved that this is best possible.

\begin{theorem}[Moon and Moser~\cite{moon:on-cliques-in-g:}]\label{mis}
For all positive integers $n$, $\fmoon(n)=\fproduct(n)$.
\end{theorem}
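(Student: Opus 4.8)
The lower bound $\fmoon(n)\ge\fproduct(n)$ is already in hand from Proposition~\ref{mis-product}, so the plan is to establish the reverse inequality $\fmoon(n)\le\fproduct(n)$, i.e., that every graph $G$ on $n$ vertices satisfies $m(G)\le\fproduct(n)$. I would prove this by strong induction on $n$, taking $\fproduct(0)=1$ to match the single (empty) MIS of the empty graph and handling $n=1$ (one isolated vertex, one MIS) as the base.

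The engine of the induction is a local counting argument around a carefully chosen vertex. First I would record the easy bijection: for any vertex $w$, the MISes of $G$ that contain $w$ are in one-to-one correspondence with the MISes of $G-N[w]$, the graph obtained by deleting $w$ together with all of its neighbors; hence there are exactly $m(G-N[w])$ of them, and $G-N[w]$ has $n-1-\deg(w)$ vertices. The second ingredient is that every MIS must meet the closed neighborhood $N[v]$ of any fixed vertex $v$ --- otherwise $v$ itself could be added, contradicting maximality. Summing the first fact over $w\in N[v]$ therefore counts each MIS at least once, giving
$$
m(G)\le\sum_{w\in N[v]}m(G-N[w]).
$$

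Now I would choose $v$ to be a vertex of minimum degree $d$. Then every $w\in N[v]$ has $\deg(w)\ge d$, so $G-N[w]$ has at most $n-1-d$ vertices; applying the inductive hypothesis and the monotonicity of $\fproduct$ to each of the $|N[v]|=d+1$ terms yields $m(G)\le(d+1)\,\fproduct(n-1-d)$. The proof then closes by combining two elementary properties of $\fproduct$: the trivial bound $\fproduct(d+1)\ge d+1$ (evident from the combinatorial definition, since $d+1$ is itself a one-term product summing to $d+1$) and the super-multiplicativity noted in the introduction, which together give
$$
(d+1)\,\fproduct(n-1-d)\le\fproduct(d+1)\,\fproduct(n-1-d)\le\fproduct\big((d+1)+(n-1-d)\big)=\fproduct(n).
$$

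I expect the only genuinely delicate point to be the counting inequality together with the choice of $v$: the bound $m(G)\le\sum_{w\in N[v]}m(G-N[w])$ is an overcount, and it is precisely the minimum-degree choice of $v$ that keeps each deleted neighborhood large enough --- so that every surviving subgraph has at most $n-1-d$ vertices --- while keeping the number of summands down to $d+1$, exactly balancing against super-multiplicativity. Verifying the bijection for MISes through a fixed vertex (in particular that $M'\cup\{w\}$ remains maximal) and checking the base cases is routine.
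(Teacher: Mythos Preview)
Your argument is correct and takes a genuinely different route from the paper's. The paper relies on Proposition~\ref{mis-recurse}, the two-term bound $m(G)\le m(G-v)+m(G-N[v])$, and then does casework on whether $G$ has a vertex of degree $0$, degree $1$, or degree at least $3$, with a residual case handling $2$-regular graphs (disjoint unions of cycles) via a second application of the same recursion. Your approach replaces all of this with a single inequality, $m(G)\le\sum_{w\in N[v]}m(G-N[w])$ at a minimum-degree vertex $v$, and closes directly through $d+1\le\fproduct(d+1)$ and the super-multiplicativity of $\fproduct$; no case split and no separate cycle analysis are needed. The trade-off is that the paper's proof is set up to deliver more: it tracks the equality cases through each branch and thereby classifies the extremal graphs, whereas your overcounting bound is generally strict and does not immediately yield that characterization.
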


Indeed, Moon and Moser showed that the only extremal graphs (the graphs with $\fmoon(n)$ MISes) are those built by taking disjoint copies of edges, triangles, and complete graphs on $4$ vertices in the quantities suggested by the formula for $\fproduct$.  (In the case $n=3i+1\ge 4$ there are two extremal graphs, one with $i-1$ triangles and two disjoint edges, the other with $i-1$ triangles and a complete graph on $4$ vertices.)

\section{A Short Proof of Theorem~\ref{mis}}
Before demonstrating the relationship between MISes and separating covers, we pause to present a short proof of Moon and Moser's theorem.  First we need a definition: for a set $X\subseteq V(G)$, we denote by $G-X$ the graph obtained by removing the vertices $X$ from $G$ and all edges incident to vertices in $X$.  When $X=\{v\}$, we abbreviate this notation to $G-v$.  Our proof makes extensive use of the following upper bound.

\begin{proposition}\label{mis-recurse}
For any graph $G$ and vertex $v\in V(G)$, we have
$$
m(G)\le m(G-v)+m(G-N[v]),
$$
where $N[v]$ denotes the {\it closed neighborhood\/} of $v$, i.e., $v$ together with its neighbors.
\end{proposition}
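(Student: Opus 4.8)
The plan is to classify the maximal independent sets of $G$ according to whether or not they contain $v$, and to bound each class by one of the two terms on the right-hand side. Write $\mathcal{M}_0$ for the collection of MISes of $G$ avoiding $v$ and $\mathcal{M}_1$ for those containing $v$; since these partition the MISes of $G$, it suffices to show $|\mathcal{M}_0|\le m(G-v)$ and $|\mathcal{M}_1|\le m(G-N[v])$.

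For the first bound, I would argue that the identity map sends each $M\in\mathcal{M}_0$ to an MIS of $G-v$. Such an $M$ is a subset of $V(G-v)$ and is certainly independent there. The point to check is maximality: if $M$ could be enlarged by some vertex $w$ inside $G-v$, then $M\cup\{w\}$ would also be independent in $G$ (removing $v$ creates no new edges), contradicting the maximality of $M$ in $G$. Since distinct sets in $\mathcal{M}_0$ remain distinct as subsets of $V(G-v)$, this gives an injection $\mathcal{M}_0\hookrightarrow\{\text{MISes of }G-v\}$, hence $|\mathcal{M}_0|\le m(G-v)$.

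For the second bound, note that any $M\in\mathcal{M}_1$ contains no neighbor of $v$, since $M$ is independent; thus $M\setminus\{v\}\subseteq V(G-N[v])$. I would then show $M\mapsto M\setminus\{v\}$ lands in the MISes of $G-N[v]$. Independence is clear, and maximality follows by the same mechanism as before: a vertex $w\in V(G-N[v])$ enlarging $M\setminus\{v\}$ is non-adjacent to every vertex of $M\setminus\{v\}$ and, being outside $N[v]$, also non-adjacent to $v$, so $M\cup\{w\}$ would be independent in $G$, again a contradiction. This map is injective, yielding $|\mathcal{M}_1|\le m(G-N[v])$.

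Summing the two inequalities gives $m(G)=|\mathcal{M}_0|+|\mathcal{M}_1|\le m(G-v)+m(G-N[v])$. The only real substance is the recurring observation that deleting vertices never introduces edges, so any claimed enlargement of an image set lifts back to an enlargement of the original MIS in $G$; once that is in hand, both maximality verifications are routine, and I do not anticipate a genuine obstacle.
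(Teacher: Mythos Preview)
Your proof is correct and follows the same approach as the paper: partition the MISes of $G$ according to whether they contain $v$, then map each class into the MISes of $G-v$ and $G-N[v]$ respectively. The paper states the second map is in fact a bijection (not merely an injection), but your injection is all that is needed for the inequality, and your more detailed verification of maximality is sound.
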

\begin{proof}
The map $M\mapsto M-v$ gives a bijection between MISes of $G$ containing $v$ and MISes of $G-N[v]$.  The proof is completed by noting that every MIS of $G$ that does not contain $v$ is also an MIS of $G-v$.
\end{proof}

\newenvironment{mis-proof}{\medskip\noindent {\it Proof of Theorem~\ref{mis}.\/}}{\qed\bigskip}
\begin{mis-proof}
Our proof is by induction on $n$, and we prove the stronger statement which characterizes the extremal graphs.  It is easy to check the theorem for graphs with five or fewer vertices, so take $G$ to be a graph on $n\ge 6$ vertices, and assume the theorem holds for graphs with fewer than $n$ vertices.

If $G$ contains a vertex of degree $0$, that is, an isolated vertex, then clearly $m(G)\le \fmoon(n-1)=\fproduct(n-1)<\fproduct(n)$.  If $G$ contains a vertex $v$ of degree $1$ then, letting $w$ denote the sole vertex adjacent to $v$, we have by Proposition~\ref{mis-recurse} that
$$
m(G)
\le
m(G-w)+m(G-N[w])
\le
2\fproduct(n-2)
=
\left\{\begin{array}{ll}
8\cdot 3^{i-2}&\mbox{if $n=3i$,}\\
4\cdot 3^{i-1}&\mbox{if $n=3i+1$,}\\
2\cdot 3^{i}&\mbox{if $n=3i+2$.}
\end{array}\right.
$$
In all three cases we have an upper bound of at most $\fproduct(n)$, with equality if and only if $n=3i+1$ and $G$ is a disjoint union of $i-1$ triangles and two edges, or $n=3i+2$ and $G$ is a disjoint union of $i$ triangles and an edge.

If $G$ contains a vertex $v$ of degree $3$ or greater, then we have
$$
m(G)
\le
m(G-v)+m(G-N[v])
\le
\fproduct(n-1)+\fproduct(n-4)
=
\left\{\begin{array}{ll}
8\cdot 3^{i-2}&\mbox{if $n=3i$,}\\
4\cdot 3^{i-1}&\mbox{if $n=3i+1$,}\\
16\cdot 3^{i-2}&\mbox{if $n=3i+2$.}
\end{array}\right.
$$
Again, all three cases give an upper bound of at most $\fproduct(n)$, with equality if and only if $n=3i+1$ and $G$ is a disjoint union of $i-1$ triangles together with a complete graph on $4$ vertices.

This leaves us to consider the case where every vertex of $G$ has degree $2$, which implies that $G$ consists of a disjoint union of cycles.  If each of these cycles is a triangle, then $n=3i$ and $G$ is a disjoint union of $i$ triangles, as desired.  Thus we may assume that at least one connected component of $G$ is a cycle of length $j\ge 4$, which we denote by $C_j$.  Our goal in this case is to show that $G$ is not extremal (i.e., $m(G)<\fproduct(n)$), and by the super-multiplicativity of $\fproduct$, it suffices to show that this single cycle of length $j$ is not extremal.  It is easy to check that $m(C_4)=2<4=\fproduct(4)$ and $m(C_5)=5<6=\fproduct(5)$, it therefore suffices to show that $m(C_j)<\fproduct(j)$ for $j\ge 6$.  (In fact, F\"uredi~\cite{furedi:the-number-of-m:} found $m(C_j)$ exactly --- it is the $j$th Perrin number.)  Label the vertices of our cycle on $j\ge 6$ vertices as $u,v,w,\dots$ so that $u$ is adjacent to $v$ which is in turn adjacent to $w$.  By applying Proposition~\ref{mis-recurse} twice, we see that for $j\ge 6$,
\begin{eqnarray*}
m(C_j)
&\le&
m(C_j-w)+m(C_j-N[w])\\
&\le&
m(C_j-w-u)+m(C_j-w-N[u])+m(C_j-N[w])\\
&\le&
2\fproduct(j-3)+\fproduct(j-4),
\end{eqnarray*}
which is strictly less that $3\fproduct(j-3)=\fproduct(j)$, completing the proof.
\end{mis-proof}

\section{A Combinatorial Explanation for ${\fkatona(\fproduct(n))=n}$}
With Moon and Moser's Theorem~\ref{mis} proved, we are now ready to explain the connection to separating covers.  Propositions~\ref{mis2sss} and \ref{sss2mis} illuminate the connection between separating covers and MISes, and then Proposition~\ref{main} gives a combinatorial explanation for why $\fkatona$ is a left inverse of $\fproduct=\fmoon$.

\begin{proposition}\label{mis2sss}
From a graph on $n$ vertices with $m$ MISes one can construct a separating cover on $m$ elements with at most $n$ sets.
\end{proposition}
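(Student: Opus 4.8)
The plan is to let the MISes of $G$ themselves form the ground set of the separating cover, and to let the vertices of $G$ index the covering sets. Concretely, I would take the ground set $X$ to be the collection of all $m$ MISes of $G$, and for each vertex $v\in V(G)$ define $S_v$ to be the set of those MISes that contain $v$. This produces a collection $\{S_v : v\in V(G)\}$ of at most $n$ subsets of the $m$-element set $X$ (at most, rather than exactly, since distinct vertices may yield the same set of MISes), so the bound on the number of sets is immediate. The substance of the proof lies in verifying the two defining properties of a separating cover.

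The covering property is the easy half. Every MIS is nonempty, since any single vertex extends to a maximal independent set; hence each MIS $M$ lies in $S_v$ for every $v\in M$, and the sets $S_v$ cover $X$.

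The separating property is where the maximality hypothesis does the real work, and I expect this to be the main (though still short) step. Given distinct MISes $M\neq M'$, maximality prevents either from containing the other, so I may choose a vertex $x\in M\setminus M'$. Because $M'$ is maximal and $x\notin M'$, the set $M'\cup\{x\}$ cannot be independent, so $x$ is adjacent to some $y\in M'$. Independence of $M$ then forces $y\notin M$, since $M$ cannot contain the adjacent pair $x,y$. I would finish by checking that $S_x$ and $S_y$ separate $M$ and $M'$: we have $M\in S_x$ and $M'\in S_y$, while $S_x\cap S_y=\emptyset$ because any MIS lying in the intersection would contain both $x$ and $y$, again contradicting independence. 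The point to keep in view is that the two witnessing sets must be \emph{disjoint}, and it is precisely the adjacency of $x$ and $y$ — which maximality of $M'$ guarantees — that supplies this disjointness.
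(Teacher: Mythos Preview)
Your proposal is correct and follows exactly the same construction as the paper: take the MISes as the ground set, let $S_v$ be the MISes containing $v$, and use maximality to find adjacent vertices witnessing the separation. If anything, you spell out a couple of details (nonemptiness of MISes for the covering step, and why $y\notin M$) that the paper leaves implicit.
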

\begin{proof}
Take $G$ to be a graph with $n$ vertices and $m$ MISes and let $\M$ denote the collection of MISes in $G$.  The separating cover promised consists of the family of sets $\{S_v : v\in V(G)\}$ where
$$
S_v=\{M\in\M : v\in M\}.
$$
Clearly this is a family with $m$ elements (the MISes $\M$) and $n$ (not necessarily distinct) sets (one for each vertex of $G$), and this family covers the set $\M$ because each MIS lies in at least one $S_v$, so it remains to check only that it is separating.  Take distinct sets $M, N\in\M$.  Because $M$ and $N$ are both maximal there is some vertex $u\in M\setminus N$ .  By the maximality of $N$, it must contain a vertex $v$ adjacent to $u$.  Therefore $M\in S_u$, $N\in S_v$, and because $u$ and $v$ are adjacent, $S_u\cap S_v=\emptyset$, completing the proof.
\end{proof}

\begin{proposition}\label{sss2mis}
From a separating cover on $m$ elements with $n$ sets one can construct a graph on $n$ vertices with at least $m$ MISes.
\end{proposition}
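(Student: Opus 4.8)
The plan is to dualize the construction from Proposition~\ref{mis2sss}. Starting from a separating cover $\S$ on a ground set $X$ with $|X|=m$ and $|\S|=n$, I would take the vertex set of $G$ to be the $n$ sets of $\S$ (regarding coinciding sets as distinct indexed vertices, so that $G$ genuinely has $n$ vertices), and I would join two sets by an edge precisely when they are \emph{disjoint}. The natural candidate for a maximal independent set attached to an element $x\in X$ is the collection $M_x=\{S\in\S : x\in S\}$ of all cover-sets that contain $x$. Because any two members of $M_x$ share the point $x$, no two of them are disjoint, hence none are adjacent, so $M_x$ is an independent set; and it is nonempty since $\S$ covers $X$.

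First I would fix, for each $x\in X$, an arbitrary extension of $M_x$ to a maximal independent set $M_x'$ of $G$ (possible since $G$ is finite), and then show that the assignment $x\mapsto M_x'$ is injective. Once injectivity is established, the $m$ elements of $X$ yield $m$ distinct MISes of the $n$-vertex graph $G$, giving $m(G)\ge m$ and completing the proof.

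The hard part is precisely this injectivity: a priori, two different elements could have their independent sets $M_x$ and $M_y$ swallowed into the same maximal independent set, so extending to MISes threatens to collapse the count. This is exactly where the separating hypothesis must be used, and it resolves the difficulty cleanly. Given distinct $x,y\in X$, the separating property supplies disjoint sets $S,T\in\S$ with $x\in S$ and $y\in T$; since $S$ and $T$ are disjoint they are \emph{adjacent} in $G$. As $S\in M_x\subseteq M_x'$ and $T\in M_y\subseteq M_y'$, the two extensions cannot be equal, for a single independent set cannot contain both endpoints $S$ and $T$ of an edge. Hence $M_x'\neq M_y'$, the map $x\mapsto M_x'$ is injective, and $G$ has at least $m$ maximal independent sets.
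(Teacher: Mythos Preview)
Your argument is correct and is essentially identical to the paper's: the paper also builds $G$ on vertex set $\S$ with adjacency given by disjointness, forms the independent sets $I_x=\{S\in\S:x\in S\}$, extends each to an MIS, and uses the separating pair $S,T$ to show these extensions are distinct. Your added remark about treating coinciding sets as distinct indexed vertices (so $G$ truly has $n$ vertices) is a small clarification the paper leaves implicit.
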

\begin{proof}
Let $\S$ be such a cover over the ground set $X$.  We define a graph $G$ on the vertices $\S$ where $S\in\S$ is adjacent to $T\in\S$ if and only if they are disjoint.  For each $x\in X$, the set
$$
I_x=\{S\in\S : x\in S\}
$$
is an independent set in $G$.  For each $x\in X$, choose an MIS $M_x\supseteq I_x$.  We have only to show that these MISes are distinct.  Take distinct elements $x,y\in X$.  Because $\S$ is separating, there are disjoint sets $S,T\in\S$ with $x\in S$ and $y\in T$.  Therefore $S\in M_x$, $T\in M_y$, and since $S$ and $T$ are disjoint they are adjacent in $G$, so $T\notin M_x$, and thus $M_x\neq M_y$.
\end{proof}

\begin{proposition}\label{main}
For all positive integers $m$ and $n$,
\begin{eqnarray*}
\fkatona(m)&=&\min\{n : \fmoon(n)\ge m\},\\
\fmoon(n)&=&\max\{m : \fkatona(m)\le n\}.
\end{eqnarray*}
\end{proposition}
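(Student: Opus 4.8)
The plan is to read both identities almost directly off Propositions~\ref{mis2sss} and \ref{sss2mis}, which already package the two directions of the correspondence between separating covers and MISes. The only genuine friction is that Proposition~\ref{mis2sss} produces a cover whose ground set has size equal to the number of MISes of the graph, and this number can overshoot the target value $m$; dealing with that overshoot is the one step requiring care.

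To prove the first identity $\fkatona(m)=\min\{n:\fmoon(n)\ge m\}$, I would write $N=\min\{n:\fmoon(n)\ge m\}$ (finite since $\fmoon$ is unbounded) and establish the two inequalities. For $\fkatona(m)\ge N$: if $\fkatona(m)=k$ then by definition there is a separating cover on $m$ elements with $k$ sets, so Proposition~\ref{sss2mis} yields a graph on $k$ vertices with at least $m$ MISes, whence $\fmoon(k)\ge m$ and therefore $k\ge N$. For $\fkatona(m)\le N$: since $\fmoon(N)\ge m$ there is a graph on $N$ vertices with at least $m$ MISes, and Proposition~\ref{mis2sss} converts it into a separating cover on $\fmoon(N)$ elements using at most $N$ sets.

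This is where the overshoot appears, and it is the step I expect to be the main (if mild) obstacle: I must trim the ground set from $\fmoon(N)\ge m$ elements down to exactly $m$. I would record a short restriction lemma: if $\S$ is a separating cover over $X$ and $Y\subseteq X$, then $\{S\cap Y:S\in\S\}$, after discarding empty sets, is again a separating cover, now over $Y$, using no more sets than $\S$. Indeed it still covers $Y$, and for distinct $x,y\in Y$ the disjoint sets $S\ni x$ and $T\ni y$ supplied by $\S$ restrict to disjoint sets $S\cap Y\ni x$ and $T\cap Y\ni y$. Applying this with any $m$-element subset $Y$ produces a separating cover on $m$ elements with at most $N$ sets, so $\fkatona(m)\le N$, completing the first identity.

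The second identity then follows formally. I would first observe that $\fmoon$ is non-decreasing, which is immediate because adjoining an isolated vertex to a graph leaves its family of MISes unchanged (the isolated vertex lies in every MIS), so the extremal graph on $n$ vertices yields a graph on $n+1$ vertices with the same number of MISes. Given monotonicity, the two statements are the standard pair of generalized inverses: by the first identity, $\fkatona(m)\le n$ holds exactly when some $n'\le n$ satisfies $\fmoon(n')\ge m$, which by monotonicity is equivalent to $\fmoon(n)\ge m$. Hence $\{m:\fkatona(m)\le n\}=\{m:m\le\fmoon(n)\}$, whose maximum is $\fmoon(n)$, giving the second identity. (Alternatively one can prove the second identity directly by the same two-inequality argument, again invoking Propositions~\ref{mis2sss} and \ref{sss2mis} together with the monotonicity of $\fmoon$.)
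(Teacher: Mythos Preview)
Your proof is correct and follows essentially the same route as the paper: both arguments reduce everything to Propositions~\ref{mis2sss} and \ref{sss2mis} together with the monotonicity of $\fkatona$ and $\fmoon$. The only cosmetic difference is that the paper asserts ``$\fkatona$ is nondecreasing'' without proof and then uses it to dispose of the overshoot in one line, whereas your ``restriction lemma'' is precisely a proof of that monotonicity; so the step you flagged as the main obstacle is the one the paper silently absorbs into its opening observation.
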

\begin{proof}
First observe that $\fkatona$ and $\fmoon$ are both nondecreasing.  The proof then follows from the two claims
\begin{enumerate}
\item[(1)] If $\fkatona(m)\le n$ then $\fmoon(n)\ge m$, and
\item[(2)] If $\fmoon(n)\ge m$ then $\fkatona(m)\le n$
\end{enumerate}

To prove (1), suppose that $\fkatona(m)\le n$.  Then there is a separating cover with $m$ elements and at most $n$ sets, so by Proposition~\ref{sss2mis}, there is a graph with at most $n$ vertices and at least $m$ MISes.  This and the fact that $g$ is nondecreasing establish that $\fmoon(n)\ge m$.

Now suppose that $\fmoon(n)\ge m$.  Then there is a graph with $n$ vertices and at least $m$ MISes, so by Proposition~\ref{mis2sss}, there is a separating cover with at least $m$ elements and at most $n$ sets.  Because $\fkatona$ is nondecreasing, we conclude that $\fkatona(m)\le n$, proving (2).  
\end{proof}

\section{Integer Complexity}
We conclude with another appearance of $g$.  The {\it complexity\/}, $c(m)$, of the integer $m$ is the least number of $1$s needed to represent it using only $+$s, $\cdot$s, and parentheses.  For example, the complexity of $10$ is $7$, and there are essentially three different minimal expressions:
$$
10=(1+1+1)(1+1+1)+1=(1+1)(1+1+1+1+1)=(1+1)((1+1)(1+1)+1),
$$
Figure~\ref{fig-complexity} shows a plot of the complexities of the first $1000$ integers.

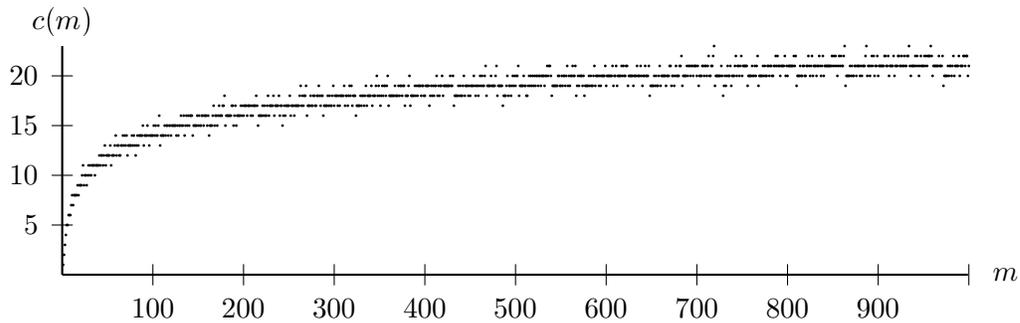
\begin{figure}
\begin{center} 
\savedata{\plotdata}[{{1,1},{2,2},{3,3},{4,4},{5,5},{6,5},{7,6},{8,6},{9,6},{10,7},{11,8},{12,7},{13,8},{14,8},{15,8},{16,8},{17,9},{18,8},{19,9},{20,9},{21,9},{22,10},{23,11},{24,9},{25,10},{26,10},{27,9},{28,10},{29,11},{30,10},{31,11},{32,10},{33,11},{34,11},{35,11},{36,10},{37,11},{38,11},{39,11},{40,11},{41,12},{42,11},{43,12},{44,12},{45,11},{46,12},{47,13},{48,11},{49,12},{50,12},{51,12},{52,12},{53,13},{54,11},{55,12},{56,12},{57,12},{58,13},{59,14},{60,12},{61,13},{62,13},{63,12},{64,12},{65,13},{66,13},{67,14},{68,13},{69,14},{70,13},{71,14},{72,12},{73,13},{74,13},{75,13},{76,13},{77,14},{78,13},{79,14},{80,13},{81,12},{82,13},{83,14},{84,13},{85,14},{86,14},{87,14},{88,14},{89,15},{90,13},{91,14},{92,14},{93,14},{94,15},{95,14},{96,13},{97,14},{98,14},{99,14},{100,14},{101,15},{102,14},{103,15},{104,14},{105,14},{106,15},{107,16},{108,13},{109,14},{110,14},{111,14},{112,14},{113,15},{114,14},{115,15},{116,15},{117,14},{118,15},{119,15},{120,14},{121,15},{122,15},{123,15},{124,15},{125,15},{126,14},{127,15},{128,14},{129,15},{130,15},{131,16},{132,15},{133,15},{134,16},{135,14},{136,15},{137,16},{138,15},{139,16},{140,15},{141,16},{142,16},{143,16},{144,14},{145,15},{146,15},{147,15},{148,15},{149,16},{150,15},{151,16},{152,15},{153,15},{154,16},{155,16},{156,15},{157,16},{158,16},{159,16},{160,15},{161,16},{162,14},{163,15},{164,15},{165,15},{166,16},{167,17},{168,15},{169,16},{170,16},{171,15},{172,16},{173,17},{174,16},{175,16},{176,16},{177,17},{178,17},{179,18},{180,15},{181,16},{182,16},{183,16},{184,16},{185,16},{186,16},{187,17},{188,17},{189,15},{190,16},{191,17},{192,15},{193,16},{194,16},{195,16},{196,16},{197,17},{198,16},{199,17},{200,16},{201,17},{202,17},{203,17},{204,16},{205,17},{206,17},{207,17},{208,16},{209,17},{210,16},{211,17},{212,17},{213,17},{214,18},{215,17},{216,15},{217,16},{218,16},{219,16},{220,16},{221,17},{222,16},{223,17},{224,16},{225,16},{226,17},{227,18},{228,16},{229,17},{230,17},{231,17},{232,17},{233,18},{234,16},{235,17},{236,17},{237,17},{238,17},{239,18},{240,16},{241,17},{242,17},{243,15},{244,16},{245,17},{246,16},{247,17},{248,17},{249,17},{250,17},{251,18},{252,16},{253,17},{254,17},{255,17},{256,16},{257,17},{258,17},{259,17},{260,17},{261,17},{262,18},{263,19},{264,17},{265,18},{266,17},{267,18},{268,18},{269,19},{270,16},{271,17},{272,17},{273,17},{274,18},{275,17},{276,17},{277,18},{278,18},{279,17},{280,17},{281,18},{282,18},{283,19},{284,18},{285,17},{286,18},{287,18},{288,16},{289,17},{290,17},{291,17},{292,17},{293,18},{294,17},{295,18},{296,17},{297,17},{298,18},{299,19},{300,17},{301,18},{302,18},{303,18},{304,17},{305,18},{306,17},{307,18},{308,18},{309,18},{310,18},{311,19},{312,17},{313,18},{314,18},{315,17},{316,18},{317,19},{318,18},{319,19},{320,17},{321,18},{322,18},{323,18},{324,16},{325,17},{326,17},{327,17},{328,17},{329,18},{330,17},{331,18},{332,18},{333,17},{334,18},{335,19},{336,17},{337,18},{338,18},{339,18},{340,18},{341,19},{342,17},{343,18},{344,18},{345,18},{346,19},{347,20},{348,18},{349,19},{350,18},{351,17},{352,18},{353,19},{354,18},{355,19},{356,19},{357,18},{358,19},{359,20},{360,17},{361,18},{362,18},{363,18},{364,18},{365,18},{366,18},{367,19},{368,18},{369,18},{370,18},{371,19},{372,18},{373,19},{374,19},{375,18},{376,19},{377,19},{378,17},{379,18},{380,18},{381,18},{382,19},{383,20},{384,17},{385,18},{386,18},{387,18},{388,18},{389,19},{390,18},{391,19},{392,18},{393,19},{394,19},{395,19},{396,18},{397,19},{398,19},{399,18},{400,18},{401,19},{402,19},{403,19},{404,19},{405,17},{406,18},{407,19},{408,18},{409,19},{410,18},{411,19},{412,19},{413,20},{414,18},{415,19},{416,18},{417,19},{418,19},{419,20},{420,18},{421,19},{422,19},{423,19},{424,19},{425,19},{426,19},{427,19},{428,20},{429,19},{430,19},{431,20},{432,17},{433,18},{434,18},{435,18},{436,18},{437,19},{438,18},{439,19},{440,18},{441,18},{442,19},{443,20},{444,18},{445,19},{446,19},{447,19},{448,18},{449,19},{450,18},{451,19},{452,19},{453,19},{454,20},{455,19},{456,18},{457,19},{458,19},{459,18},{460,19},{461,20},{462,19},{463,20},{464,19},{465,19},{466,20},{467,21},{468,18},{469,19},{470,19},{471,19},{472,19},{473,20},{474,19},{475,19},{476,19},{477,19},{478,20},{479,21},{480,18},{481,19},{482,19},{483,19},{484,19},{485,19},{486,17},{487,18},{488,18},{489,18},{490,19},{491,20},{492,18},{493,19},{494,19},{495,18},{496,19},{497,20},{498,19},{499,20},{500,19},{501,20},{502,20},{503,21},{504,18},{505,19},{506,19},{507,19},{508,19},{509,20},{510,19},{511,19},{512,18},{513,18},{514,19},{515,20},{516,19},{517,20},{518,19},{519,20},{520,19},{521,20},{522,19},{523,20},{524,20},{525,19},{526,20},{527,20},{528,19},{529,20},{530,20},{531,20},{532,19},{533,20},{534,20},{535,21},{536,20},{537,21},{538,21},{539,20},{540,18},{541,19},{542,19},{543,19},{544,19},{545,19},{546,19},{547,20},{548,20},{549,19},{550,19},{551,20},{552,19},{553,20},{554,20},{555,19},{556,20},{557,21},{558,19},{559,20},{560,19},{561,20},{562,20},{563,21},{564,20},{565,20},{566,21},{567,18},{568,19},{569,20},{570,19},{571,20},{572,20},{573,20},{574,19},{575,20},{576,18},{577,19},{578,19},{579,19},{580,19},{581,20},{582,19},{583,20},{584,19},{585,19},{586,20},{587,21},{588,19},{589,20},{590,20},{591,20},{592,19},{593,20},{594,19},{595,20},{596,20},{597,20},{598,20},{599,21},{600,19},{601,20},{602,20},{603,20},{604,20},{605,20},{606,20},{607,21},{608,19},{609,20},{610,20},{611,21},{612,19},{613,20},{614,20},{615,20},{616,20},{617,21},{618,20},{619,21},{620,20},{621,20},{622,21},{623,21},{624,19},{625,20},{626,20},{627,20},{628,20},{629,20},{630,19},{631,20},{632,20},{633,20},{634,21},{635,20},{636,20},{637,20},{638,21},{639,20},{640,19},{641,20},{642,20},{643,21},{644,20},{645,20},{646,20},{647,21},{648,18},{649,19},{650,19},{651,19},{652,19},{653,20},{654,19},{655,20},{656,19},{657,19},{658,20},{659,21},{660,19},{661,20},{662,20},{663,20},{664,20},{665,20},{666,19},{667,20},{668,20},{669,20},{670,21},{671,21},{672,19},{673,20},{674,20},{675,19},{676,20},{677,21},{678,20},{679,20},{680,20},{681,21},{682,21},{683,22},{684,19},{685,20},{686,20},{687,20},{688,20},{689,21},{690,20},{691,21},{692,21},{693,20},{694,21},{695,21},{696,20},{697,21},{698,21},{699,21},{700,20},{701,21},{702,19},{703,20},{704,20},{705,20},{706,21},{707,21},{708,20},{709,21},{710,21},{711,20},{712,21},{713,22},{714,20},{715,20},{716,21},{717,21},{718,22},{719,23},{720,19},{721,20},{722,20},{723,20},{724,20},{725,20},{726,20},{727,21},{728,20},{729,18},{730,19},{731,20},{732,19},{733,20},{734,21},{735,20},{736,20},{737,21},{738,19},{739,20},{740,20},{741,20},{742,21},{743,22},{744,20},{745,21},{746,21},{747,20},{748,21},{749,22},{750,20},{751,21},{752,21},{753,21},{754,21},{755,21},{756,19},{757,20},{758,20},{759,20},{760,20},{761,21},{762,20},{763,20},{764,21},{765,20},{766,21},{767,22},{768,19},{769,20},{770,20},{771,20},{772,20},{773,21},{774,20},{775,21},{776,20},{777,20},{778,21},{779,21},{780,20},{781,21},{782,21},{783,20},{784,20},{785,21},{786,21},{787,22},{788,21},{789,22},{790,21},{791,21},{792,20},{793,21},{794,21},{795,21},{796,21},{797,22},{798,20},{799,21},{800,20},{801,21},{802,21},{803,21},{804,21},{805,21},{806,21},{807,22},{808,21},{809,22},{810,19},{811,20},{812,20},{813,20},{814,21},{815,20},{816,20},{817,21},{818,21},{819,20},{820,20},{821,21},{822,21},{823,22},{824,21},{825,20},{826,21},{827,22},{828,20},{829,21},{830,21},{831,21},{832,20},{833,21},{834,21},{835,22},{836,21},{837,20},{838,21},{839,22},{840,20},{841,21},{842,21},{843,21},{844,21},{845,21},{846,21},{847,21},{848,21},{849,22},{850,21},{851,22},{852,21},{853,22},{854,21},{855,20},{856,21},{857,22},{858,21},{859,22},{860,21},{861,21},{862,22},{863,23},{864,19},{865,20},{866,20},{867,20},{868,20},{869,21},{870,20},{871,21},{872,20},{873,20},{874,21},{875,21},{876,20},{877,21},{878,21},{879,21},{880,20},{881,21},{882,20},{883,21},{884,21},{885,21},{886,22},{887,23},{888,20},{889,21},{890,21},{891,20},{892,21},{893,22},{894,21},{895,22},{896,20},{897,21},{898,21},{899,22},{900,20},{901,21},{902,21},{903,21},{904,21},{905,21},{906,21},{907,22},{908,22},{909,21},{910,21},{911,22},{912,20},{913,21},{914,21},{915,21},{916,21},{917,22},{918,20},{919,21},{920,21},{921,21},{922,22},{923,22},{924,21},{925,21},{926,22},{927,21},{928,21},{929,22},{930,21},{931,21},{932,22},{933,22},{934,23},{935,21},{936,20},{937,21},{938,21},{939,21},{940,21},{941,22},{942,21},{943,22},{944,21},{945,20},{946,21},{947,22},{948,21},{949,21},{950,21},{951,22},{952,21},{953,22},{954,21},{955,22},{956,22},{957,22},{958,23},{959,22},{960,20},{961,21},{962,21},{963,21},{964,21},{965,21},{966,21},{967,22},{968,21},{969,21},{970,21},{971,22},{972,19},{973,20},{974,20},{975,20},{976,20},{977,21},{978,20},{979,21},{980,21},{981,20},{982,21},{983,22},{984,20},{985,21},{986,21},{987,21},{988,21},{989,22},{990,20},{991,21},{992,21},{993,21},{994,22},{995,22},{996,21},{997,22},{998,22},{999,20},{1000,21}}]
\psset{xunit=0.00474525in, yunit=0.0520833in}
\begin{pspicture}(0,-3)(1053.684211,28.80000000)
\psaxes[dy=5,Dy=5,dx=100,Dx=100,showorigin=false](0,0)(999,23)
\rput[c](0,25.4){$c(m)$}
\rput[l](1026.34,0){$m$}
\dataplot[plotstyle=dots,dotstyle=*,dotsize=3\psxunit]{\plotdata}
\end{pspicture}
\end{center}
\caption{The complexities of the first 1000 integers.}\label{fig-complexity}
\end{figure}

This definition was first considered by Mahler and Popken~\cite{mahler:on-a-maximum-pr:}, and while a straightforward recurrence,
$$
c(m)=\min \{c(d)+c(m/d) : d\divides m\}\cup\{c(i)+c(m-i) : 1\le i\le m-1\},
$$
is easy to verify, several outstanding conjectures and questions remain, for which we refer to R. K. Guy~\cite{guy:unsolved-proble:}.  In that article, Guy mentions that J. Selfridge gave an inductive proof of the following result. 

\begin{proposition}[Selfridge~{[unpublished]}]\label{prop-integer-complexity}
The greatest integer of complexity $n$ is $\fmoon(n)$.
\end{proposition}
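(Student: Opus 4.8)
The plan is to prove the equivalent statement that $\fproduct(n)$ is the largest integer whose complexity is \emph{at most} $n$, and then to upgrade ``at most $n$'' to ``exactly $n$''. One inequality is immediate: since $\fproduct(n)$ is a product of positive integers $a_1,\dots,a_k$ with $a_1+\cdots+a_k=n$, and each $a_i$ can be written using $a_i$ ones, multiplying these subexpressions together represents $\fproduct(n)$ with $n$ ones, so $c(\fproduct(n))\le n$. The content is therefore the reverse inequality, which I would phrase as the clean bound $m\le\fproduct(c(m))$ holding for every positive integer $m$.

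I would establish $m\le\fproduct(c(m))$ by strong induction on $m$, the base case $m=1$ being $1\le\fproduct(1)=1$. For $m\ge 2$, the recurrence for $c$ given above shows that a minimal expression splits at its top operation, so that $c(m)=c(a)+c(b)$ where either $m=ab$ or $m=a+b$, and in either case $a,b<m$. Applying the inductive hypothesis to $a$ and $b$ gives $a\le\fproduct(c(a))$ and $b\le\fproduct(c(b))$. In the multiplicative case I would conclude directly from the super-multiplicativity of $\fproduct$ that
$$
m=ab\le\fproduct(c(a))\fproduct(c(b))\le\fproduct\bigl(c(a)+c(b)\bigr)=\fproduct(c(m)).
$$
In the additive case the same conclusion follows once we know the analogous ``super-additivity'' inequality $\fproduct(p)+\fproduct(q)\le\fproduct(p+q)$.

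The step I expect to require the most care is this additive inequality, since it does not follow from super-multiplicativity alone in the boundary case. I would split on whether the smaller argument equals $1$: when $p,q\ge 2$ we have $\fproduct(p),\fproduct(q)\ge 2$, so $\fproduct(p)+\fproduct(q)\le\fproduct(p)\fproduct(q)\le\fproduct(p+q)$, while when $p=1$ we instead use that $\fproduct$ is integer-valued and strictly increasing, giving $\fproduct(1)+\fproduct(q)=1+\fproduct(q)\le\fproduct(q+1)$. With $m\le\fproduct(c(m))$ in hand, any $m$ of complexity at most $n$ satisfies $m\le\fproduct(c(m))\le\fproduct(n)$, so $\fproduct(n)$ is the largest integer of complexity at most $n$. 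Finally, to see that its complexity is exactly $n$, note that if $c(\fproduct(n))=k<n$ then the bound just proved together with the strict monotonicity of $\fproduct$ would force $\fproduct(n)\le\fproduct(k)<\fproduct(n)$, a contradiction; hence no integer of complexity $n$ exceeds $\fproduct(n)=\fmoon(n)$ and $\fproduct(n)$ itself attains it, which is the assertion.
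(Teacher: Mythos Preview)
Your argument is correct. The induction on $m$ goes through cleanly: the multiplicative split has $a,b\ge 2$ (otherwise $c(a)+c(b)=1+c(m)>c(m)$), so $a,b<m$ in both cases; super-multiplicativity handles the product case; and your two-case verification of $\fproduct(p)+\fproduct(q)\le\fproduct(p+q)$ is sound, since $(x-1)(y-1)\ge 1$ for $x,y\ge 2$ and strict monotonicity of an integer-valued function gives the $p=1$ case. Your final paragraph correctly upgrades ``complexity at most $n$'' to ``complexity exactly $n$'', a point the paper leaves implicit.

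However, your route is \emph{not} the one the paper takes. The paper's whole point in this section is to derive the upper bound from Moon and Moser's theorem rather than by a direct induction: given an expression for $m$ using $n$ ones, it builds a graph on $n$ vertices with exactly $m$ MISes by recursing on the expression tree, sending $e_1+e_2$ to the \emph{join} of the associated graphs (using $m(G+H)=m(G)+m(H)$) and $e_1\cdot e_2$ to their disjoint union (using $m(G\cup H)=m(G)m(H)$). Theorem~\ref{mis} then forces $m\le\fmoon(n)$. Your proof is the purely arithmetic inductive argument the paper alludes to as Selfridge's original; it is more elementary and self-contained, needing no graph theory at all. The paper's construction, by contrast, earns its keep by exhibiting integer complexity as yet another instance of the MIS framework, which is the unifying theme of the article.
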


One direction of Selfridge's proposition is clear: the problem we began with shows that $\fproduct(n)=\fmoon(n)$ has complexity at most $n$.  In a final demonstration of the surprising versatility of Moon and Moser's Theorem~\ref{mis}, we show how it implies the other direction, via the following construction.

\begin{proposition}\label{comp2mis}
From an expression of the integer $m$ with $n$ $1$s one can construct a graph on $n$ vertices with $m$ MISes.
\end{proposition}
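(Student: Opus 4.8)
The plan is to induct on the structure of the expression, which we regard as a binary tree whose $n$ leaves are the $1$s and whose internal nodes are labeled $+$ or $\cdot$. The base case is the expression consisting of a single $1$: here $m=1$ and $n=1$, and the one-vertex graph has a unique MIS (its single vertex), so the claim holds. For the inductive step, write the top-level operation so that it combines two subexpressions evaluating to $m_1$ and $m_2$ and using $n_1$ and $n_2$ ones, where $m=m_1\star m_2$ with $\star\in\{+,\cdot\}$ and $n=n_1+n_2$. By the inductive hypothesis there are graphs $G_1$ and $G_2$ on $n_1$ and $n_2$ vertices with $m(G_1)=m_1$ and $m(G_2)=m_2$, and the goal is to combine them into a graph on $n_1+n_2=n$ vertices with $m_1\star m_2$ MISes.

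The product case is immediate from the tools already on hand: by Proposition~\ref{mis-product}, the disjoint union of $G_1$ and $G_2$ has $m(G_1)m(G_2)=m_1m_2$ MISes and $n_1+n_2$ vertices, which settles $\star=\cdot$.

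The sum case is where I expect the only genuine work to lie, since it calls for a new construction rather than a cited fact. The right operation is the \emph{join}, obtained from the disjoint union of $G_1$ and $G_2$ by adding every edge between $V(G_1)$ and $V(G_2)$. The key observation is that no independent set of the join can meet both $V(G_1)$ and $V(G_2)$, since any such pair of vertices is adjacent; hence every MIS of the join lies entirely on one side. I would then verify, for $G_1$ and $G_2$ each nonempty, that the MISes of the join contained in $V(G_1)$ are exactly the MISes of $G_1$: no edge within $G_1$ is added, so independence transfers in both directions, while a \emph{nonempty} independent subset of $V(G_1)$ already blocks the addition of any vertex of $G_2$, so maximality in $G_1$ is equivalent to maximality in the join. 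The symmetric statement holds for $V(G_2)$, yielding a bijection between the MISes of the join and the disjoint union of those of $G_1$ and $G_2$, whence the join has $m_1+m_2$ MISes and $n_1+n_2=n$ vertices.

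The one subtlety to watch in the join argument is the nonemptiness hypothesis: if a side were empty the count would be thrown off, because the empty set would then become the maximal set on that side. This case never arises here, since every leaf of the parse tree contributes a vertex and so every subexpression yields a graph with at least one vertex. With both the product and sum cases established, the induction closes. Combined with the easy reverse inequality noted before the proposition (namely that $\fproduct(n)=\fmoon(n)$ admits an expression with $n$ ones), this proves Selfridge's Proposition~\ref{prop-integer-complexity}.
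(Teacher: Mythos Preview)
Your proof is correct and follows essentially the same approach as the paper: induction on the structure of the expression, associating disjoint union to $\cdot$ and the join to $+$, with the one-vertex graph as the base case. Your treatment is in fact slightly more careful than the paper's, which simply asserts the formula $m(G+H)=m(G)+m(H)$ without flagging the nonemptiness issue you point out.
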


\begin{figure}
\begin{center}
\psset{xunit=0.007in, yunit=0.007in}
\psset{linewidth=0.02in}
\begin{pspicture}(0,0)(120,80)
\pscircle*(0,20){0.04in}
\pscircle*(0,60){0.04in}
\pscircle*(40,20){0.04in}
\pscircle*(40,60){0.04in}
\pscircle*(80,40){0.04in}
\pscircle*(120,20){0.04in}
\pscircle*(120,60){0.04in}
\psline(0,20)(0,60)
\psline(40,20)(40,60)
\psline(40,20)(120,60)
\psline(40,60)(120,20)
\psline(120,20)(120,60)
\end{pspicture}
\end{center}
\caption{The construction described in the proof of Proposition~\ref{prop-integer-complexity}, applied to the expression
$$
10=(1+1)((1+1)(1+1)+1).
$$
There are graphs on $7$ vertices with more MISes than the graph shown because $10$ is not the greatest integer of complexity $7$ ($12$ is).}
\label{prop-integer-complexity-figure}
\end{figure}

\begin{proof}
Before describing our inductive construction we need a definition.  Given graphs $G$ and $H$, their {\it join\/} is the graph $G+H$ obtained from their disjoint union $G\cup H$ by adding all edges connecting vertices of $G$ with vertices of $H$.  We know already from Proposition~\ref{mis-product} that $m(G\cup H)=m(G)m(H)$, and a similar formula for joins is easy to verify: $m(G+H)=m(G)+m(H)$ because every MIS in $G+H$ is either an MIS of $G$ or an MIS of $H$.

Now suppose we have an expression of the integer $m$ with $n$ $1$s.  If $n=1$, then there is only one such expression, $1$, and we associate to this expression the one vertex graph.  If $n\ge 2$, then any such expression must decompose as either $e_1+e_2$ or $e_1e_2$, where $e_1$ and $e_2$ are expressions with fewer $1$s.  If our expression is $e_1+e_2$ then we associate it to the join of the graphs associated to $e_1$ and $e_2$, and if our expression is $e_1e_2$ then we associate it to the disjoint union of the graphs associated to $e_1$ and $e_2$.  Figure~\ref{prop-integer-complexity-figure} shows an example.  It follows that the resulting graph has precisely as many vertices as the expression has $1$s, and precisely $m$ MISes.
\end{proof}

\minisec{Acknowledgements}
I am grateful to the referees for their detailed and insightful comments.  In particular, the change from ``separating families'' to ``separating covers,'' which simplified many of these results, was suggested by one of the referees.

\bibliographystyle{acm}
\bibliography{../refs}

\end{document}